\begin{document}

\newcommand{\F}{\mathcal{F}}
\newcommand{\Sc}{\mathcal{S}}
\newcommand{\R}{\mathbb R}
\newcommand{\T}{\mathbb T}
\newcommand{\N}{\mathbb N}
\newcommand{\Z}{\mathbb Z}
\newcommand{\C}{\mathbb C}  
\newcommand{\h}[2]{\mbox{$ \widehat{H}^{#1}_{#2}(\R)$}}
\newcommand{\hh}[3]{\mbox{$ \widehat{H}^{#1}_{#2, #3}$}} 
\newcommand{\n}[2]{\mbox{$ \| #1\| _{ #2} $}} 
\newcommand{\x}{\mbox{$X^r_{s,b}$}} 
\newcommand{\xx}{\mbox{$X_{s,b}$}}
\newcommand{\X}[3]{\mbox{$X^{#1}_{#2,#3}$}} 
\newcommand{\XX}[2]{\mbox{$X_{#1,#2}$}}
\newcommand{\q}[2]{\mbox{$ {\| #1 \|}^2_{#2} $}}
\newcommand{\e}{\varepsilon}
\newcommand{\om}{\omega}
\newcommand{\lb}{\langle}
\newcommand{\rb}{\rangle}
\newcommand{\ls}{\lesssim}
\newcommand{\gs}{\gtrsim}
\newcommand{\pd}{\partial}
\newtheorem{lemma}{Lemma} 
\newtheorem{kor}{Corollary} 
\newtheorem{theorem}{Theorem}
\newtheorem{prop}{Proposition}

\title[Modified Zakharov-Kuznetsov equation in 3 D]{A remark on the modified Zakharov-Kuznetsov equation in three space dimensions}

\author[A.~Gr\"unrock]{Axel~Gr\"unrock}

\address{Heinrich-Heine-Universit\"at D\"usseldorf, Mathematisches Institut,
Universit\"atsstra{\ss}e 1, 40225 D\"usseldorf, Germany}
\email{gruenroc@math.uni-duesseldorf.de}

\subjclass[2000]{Primary: 35Q53. Secondary: 37K40}

\begin{abstract}
 The Cauchy Problem for the modified Zakharov-Kuznetsov equation in three space dimensions is shown to be locally
well-posed in $H^s(\R^3)$ for $s > \frac12$. Combined with the conservation of mass and energy this result implies
global well-posedness for small data in $H^1(\R^3)$.
\end{abstract}

\keywords{modified Zakharov-Kuznetsov equation -- local and global well-posedness -- Fourier restriction norm method}
 \maketitle

\section{Introduction and main result}

The Cauchy Problem for the modified Zakharov-Kuznetsov (mZK) equation in two space dimensions

\begin{equation}\label{mZK2}
 u_t + \pd^3_x u + \pd_x\pd^2_y u + \pd_x (u^3)=0, \qquad u(0,x,y)=u_0(x,y), \qquad (x,y)\in \R^2
\end{equation}

has been studied extensively in recent years. Local well-posedness in $H^1(\R^2)$ was obtained in 2003 by Biagioni and
Linares, see \cite{BL}. Combined with the conservation of mass and energy their local result implies global well-posedness,
provided the data are sufficiently small in $L^2(\R^2)$. The local result was generalized to data in $H^s(\R^2)$, $s>\frac34$
by Linares and Pastor in \cite{LP09}, the same authors showed global well-posedness in $H^s(\R^2)$, $s>\frac{53}{63}$, under
an additional smallness assumption on the $L^2$-norm of the data \cite{LP11}. Further progress on the local problem was reached by
Ribaud and Vento in \cite{RV12}, who established well-posedness in $H^s(\R^2)$ for $s>\frac14$. Since the mZK-equation is a
higher dimensional generalization of the modified Korteweg-de Vries equation, for which the Cauchy Problem is known to be ill-posed
in the $C^0$-uniform sense in $H^s(\R)$ with $s < \frac14$ (see \cite{KPV01}), this may be the best possible result except for the
endpoint $s=\frac14$. On the other hand the critical space obtained by scaling considerations for mZK in two dimensions is
$L^2(\R^2)$. So local well-posedness of \eqref{mZK2} in $H^s(\R^2)$ with $0 \le s \le \frac14$ ist still an open problem.

\qquad

In contrast to the two dimensional case, no results concerning the Cauchy Problem for the mZK-equation in three space dimensions
are known, yet. In this short note we shall apply mostly well-known linear and a new bilinear estimate for solutions of the
corresponding linear equation to establish the following result.

\begin{theorem}\label{LWPZK3D}
 The Cauchy-Problem for the modified Zakharov-Kuznetsov equation

\begin{equation}\label{mZK}
 u_t + \pd_x \Delta u + \pd_x (u^{3})=0, \qquad u(0,x,y)=u_0(x,y)
\end{equation}
with $x \in \R$ and $y \in \R^2$ is locally well-posed for data $u_0 \in H^s(\R^3)$, provided that $s> \frac12$.
\end{theorem}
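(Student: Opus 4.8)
The plan is to solve the integral (Duhamel) formulation of \eqref{mZK} by a contraction argument in a Bourgain space adapted to the dispersion of the linearized flow. Writing the symbol of $-\pd_x\Delta$ as the phase $\phi(\xi,\eta)=\xi^3+\xi|\eta|^2$, so that $\widehat{e^{-t\pd_x\Delta}u_0}(\xi,\eta)=e^{it\phi(\xi,\eta)}\widehat{u_0}(\xi,\eta)$, I would work in the space $\XX{s}{b}$ with norm $\|u\|_{\XX{s}{b}}=\|\lb(\xi,\eta)\rb^s\lb\tau-\phi(\xi,\eta)\rb^b\,\widetilde u\|_{L^2}$, where $\widetilde u$ is the space-time Fourier transform. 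Fixing $b>\frac12$ and a smooth time cut-off $\psi$, the standard homogeneous and inhomogeneous linear estimates
\begin{align*}
\|\psi\, e^{-t\pd_x\Delta}u_0\|_{\XX{s}{b}}&\ls\|u_0\|_{H^s},\\
\Big\|\psi\int_0^t e^{-(t-t')\pd_x\Delta}F\,dt'\Big\|_{\XX{s}{b}}&\ls\|F\|_{\XX{s}{b-1}}
\end{align*}
reduce everything to a single trilinear estimate; these are the well-known linear estimates alluded to in the introduction, and the time truncation supplies a small positive power $T^\theta$ needed to close the contraction.

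The heart of the matter is the trilinear bound
\[
\|\pd_x(u_1u_2u_3)\|_{\XX{s}{b-1}}\ls\prod_{i=1}^{3}\|u_i\|_{\XX{s}{b}},\qquad s>\tfrac12,
\]
with $b-1$ replaced by some $b'\in(-\frac12,0)$ to create the room for $T^\theta$. By duality and Plancherel this is equivalent to an $L^2$-estimate for the convolution multiplier carrying the weight $\lb(\xi,\eta)\rb^s|\xi|\lb\tau-\phi\rb^{b-1}$ against the three factors $\lb\zeta_i\rb^{-s}\lb\tau_i-\phi_i\rb^{-b}$, where $\zeta=\zeta_1+\zeta_2+\zeta_3$ and $\tau=\tau_1+\tau_2+\tau_3$. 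I would organize the argument by a Littlewood--Paley decomposition and a case analysis according to the largest frequency, noting throughout that $s=\frac12$ is \emph{exactly} the scaling-critical index for $H^s(\R^3)$: the scaling $u_\lambda=\lambda\,u(\lambda^3 t,\lambda x,\lambda y)$ leaves $\dot H^{1/2}$ invariant, so the estimate must be essentially sharp and no regularity may be wasted when recovering the derivative $|\xi|$.

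The gain is produced by a bilinear $L^2$-estimate for products of free solutions whose frequencies are separated. Here the relevant quantity is the gradient difference $\nabla\phi(\zeta_1)-\nabla\phi(\zeta_2)$ with $\nabla\phi=(3\xi^2+|\eta|^2,\,2\xi\eta)$, which measures the transversality of the characteristic surfaces and controls the Jacobian in the resolution of the resonance relation $\tau-\phi(\zeta_1)-\phi(\zeta_2)=0$. A lower bound on this transversality yields a smoothing estimate of the schematic form $\|uv\|_{L^2_{t,x,y}}\ls(\text{gap})^{-1/2}\|u_0\|_{L^2}\|v_0\|_{L^2}$; combined with the $L^4$-Strichartz inequality obtained from the diagonal case $u=v$ and with the dispersive gain encoded in $\max_i\lb\tau_i-\phi_i\rb$ through the resonance identity $\sum_i\phi(\zeta_i)-\phi(\zeta)$, this is what I would assemble into the trilinear bound. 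This bilinear inequality is the genuinely new ingredient.

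I expect the main obstacle to be the degeneracy of the dispersion in the directions in which transversality fails. The Hessian of $\phi$ has determinant $8\xi(3\xi^2-|\eta|^2)$, so smoothing is lost near $\xi=0$ and near the cone $3\xi^2=|\eta|^2$, precisely where the $\pd_x$-loss cannot be recovered cheaply. Controlling these resonant, nearly parallel frequency configurations---and verifying that the separated-frequency bilinear bound, interpolated against the endpoint cases, still absorbs the full factor $|\xi|$ at the critical regularity $s=\frac12$---is where the real work lies. The remaining high--low and low--high interactions should then follow from the linear Strichartz estimate together with routine summation over dyadic blocks.
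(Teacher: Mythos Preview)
Your overall framework---contraction in $X_{s,b}$, standard linear estimates, reduction to a trilinear bound $\|\pd_x(u_1u_2u_3)\|_{X_{s,b'}}\ls\prod_i\|u_i\|_{X_{s,b}}$ with some $b'>-\tfrac12$---is exactly the one the paper uses. The route to the trilinear bound, however, is genuinely different.

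You propose a direct Fourier-side attack: Littlewood--Paley decomposition, case analysis by largest frequency and largest modulation, a transversality-based bilinear $L^2$ estimate governed by $|\nabla\phi(\zeta_1)-\nabla\phi(\zeta_2)|$, and the resonance function $\phi(\zeta)-\sum_i\phi(\zeta_i)$. The paper avoids all of this. Its key bilinear estimate is not of your ``gap$^{-1/2}$'' type but rather
\[
\|U_\phi u_0\,U_\phi v_0\|_{L^2_{txy}}\ls\|D_x^{-1/2}u_0\|_{L^2}\,\|\lb D_x\rb^{s}v_0\|_{L^2},\qquad s>\tfrac12,
\]
which gains exactly half an $x$-derivative on one factor (matching $\pd_x$) at the price of $s$ $x$-derivatives on the other. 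The trilinear bound is then obtained in two steps: first, this estimate and its \emph{dual} are chained together---with a simple pointwise Fourier redistribution of $\lb\nabla_{xy}\rb^s|\xi|$ over the three factors---to produce the desired estimate but only with $b'=-b<-\tfrac12$; second, a completely separate estimate at higher regularity $\sigma>\tfrac54$ and $b'=0$ is proved using Kato local smoothing $\|\nabla_{xy}U_\phi u_0\|_{L_x^\infty L^2_{yt}}\ls\|u_0\|_{L^2}$ together with a maximal-function estimate $\|U_\phi u_0\|_{L^4_{xy}L^\infty_t}\ls\|u_0\|_{H^{3/4+}}$. Interpolating the two yields $b'>-\tfrac12$ for every $s>\tfrac12$.

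What each approach buys: the paper's argument never performs a modulation or frequency-interaction case analysis and never confronts the Hessian degeneracy you identify; the derivative loss is absorbed cleanly by the $D_x^{-1/2}$ gain of the bilinear estimate and by Kato smoothing, and the passage to $b'>-\tfrac12$ is handled by interpolation rather than by summing dyadic blocks. Your plan is the more ``standard'' Bourgain-style program and could in principle succeed, but the part you flag as ``where the real work lies''---the resonant configurations near $\xi=0$ and near $3\xi^2=|\eta|^2$---is precisely what is left unproved in your sketch, and it is not obvious that the transversality/resonance bookkeeping closes at the sharp threshold $s>\tfrac12$. The paper sidesteps this entirely.
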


We remark that from the scaling point of view this theorem covers the whole subcritical range. Combining the above local result
with the conservation of mass and energy as in \cite[p. 3]{FLP} we obtain global well-posedness for small data in $H^1(\R^3)$.

\begin{kor}
 Let $u_0 \in H^s(\R^3)$ with $s \ge 1$. Then there exists $\e > 0$, such that for $\|u_0\|_{H^1} < \e$ the local
solution of the Cauchy Problem \eqref{mZK} guaranteed by Theorem \ref{LWPZK3D} extends globally in time.
\end{kor}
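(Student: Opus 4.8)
The plan is to follow the template of \cite[p.~3]{FLP}: use the two conservation laws of \eqref{mZK} together with the local theory of Theorem \ref{LWPZK3D} to produce a uniform-in-time bound on $\|u(t)\|_{H^1}$, which then lets the local solution be extended to all times. First I would record the conserved quantities. Multiplying \eqref{mZK} by $u$ and integrating shows that the mass $\|u(t)\|_{L^2}^2$ is conserved, since both the skew-adjoint operator $\pd_x\Delta$ and the map $u\mapsto \pd_x(u^3)$ contribute null terms after integration by parts. Writing the equation in the Hamiltonian form $u_t = \pd_x\tfrac{\delta E}{\delta u}$ with
\[
 E(u) = \frac12 \int_{\R^3} |\nabla u|^2\, dx\,dy - \frac14 \int_{\R^3} u^4\, dx\,dy ,
\]
one checks directly that $\tfrac{d}{dt}E(u(t)) = 0$. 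These identities hold for smooth solutions and extend to the $H^1$-solutions furnished by Theorem \ref{LWPZK3D} by approximation and continuous dependence.

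The heart of the matter is to convert energy conservation into an $H^1$-bound, and here lies the main obstacle: in three dimensions the quartic term is \emph{not} controlled by the quadratic gradient term. The relevant Gagliardo--Nirenberg inequality on $\R^3$ reads $\|u\|_{L^4}\ls \|\nabla u\|_{L^2}^{3/4}\|u\|_{L^2}^{1/4}$, so that
\[
 \|u\|_{L^4}^4 \ls \|\nabla u\|_{L^2}^{3}\,\|u\|_{L^2},
\]
a cubic, hence super-quadratic, power of $\|\nabla u\|_{L^2}$. Consequently $E$ is not coercive and any bound must exploit the smallness hypothesis. Setting $X(t) = \|\nabla u(t)\|_{L^2}^2$ and inserting mass conservation, the two identities combine to
\[
 \tfrac12 X(t) - C\,\|u_0\|_{L^2}\,X(t)^{3/2} \le E(u_0),
\]
with $C$ the Gagliardo--Nirenberg constant. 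As a function of $X$ the left-hand side increases up to a maximum at some $X_\ast \sim \|u_0\|_{L^2}^{-2}$ of height $\gs \|u_0\|_{L^2}^{-2}$ and decreases thereafter.

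I would then close the argument by continuity. For $\|u_0\|_{H^1} < \e$ we have $X(0)\le \e^2$ small and $E(u_0)\le \tfrac12\|\nabla u_0\|_{L^2}^2 < \tfrac12\e^2$, whereas the barrier height is $\gs \e^{-2}$; choosing $\e$ small enough makes $E(u_0)$ strictly below the barrier and $X(0)$ below $X_\ast$. Since $X(t)$ is continuous on the interval of existence and the displayed inequality forbids it from attaining $X_\ast$, it stays trapped below the smaller root, giving $\|\nabla u(t)\|_{L^2}^2 \le X_- \ls \e^2$ uniformly in $t$. Together with $\|u(t)\|_{L^2}=\|u_0\|_{L^2}$ this yields $\sup_t \|u(t)\|_{H^1}\ls \e$. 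Because the existence time in Theorem \ref{LWPZK3D} at $s=1$ is bounded below by a function of $\|u_0\|_{H^1}$ alone, this uniform bound permits iteration of the local result with a time step bounded away from zero, covering all of $[0,\infty)$; running the argument backward handles negative times. For data with $s>1$ the global $H^1$-solution is then upgraded by a standard persistence-of-regularity argument based on the same multilinear estimates, so that the solution remains in $H^s$ for all time. The essential difficulty throughout is precisely the non-coercivity of $E$ noted above, which is what forces the restriction to small data.
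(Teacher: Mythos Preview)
Your proposal is correct and follows exactly the route the paper indicates: the paper does not give a detailed argument for the Corollary but simply refers to the mass--energy method of \cite[p.~3]{FLP}, and you have carried out precisely that computation (Gagliardo--Nirenberg in $\R^3$, trapping argument for $\|\nabla u(t)\|_{L^2}^2$, iteration of the local result). Nothing in your sketch deviates from or goes beyond what the paper intends.
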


\section{Estimates for free solutions of the linear equation} 

Let $U_{\phi}(t)u_0$ denote the solution of the Cauchy Problem for the linear equation

\begin{equation}\label{ZKlin}
 u_t + \pd_x \Delta u=0, \qquad u(0,x,y)=u_0(x,y),
\end{equation}

where $t \in \R$, $x \in \R$, $y \in \R^2$ and $\Delta = \pd_x^2 + \Delta_y$, $\Delta_y=\pd_{y_1}^2+\pd_{y_2}^2$.
The phase function is given here by $\phi(\xi,\eta)=\xi(\xi^2+|\eta|^2)$, where $(\xi,\eta) \in \R\times\R^2$
are the dual variables corresponding to $(x,y)\in \R\times\R^2$. Then the Strichartz type estimate 

\begin{equation}\label{ZK.3}
 \|D_x^{\theta \e /2}U_{\phi}u_0\|_{L_t^pL_{xy}^q}\ls \|u_0\|_{L^2_{xy}}
\end{equation}

is valid for $0 < \e < 1$, $0<\theta< (1+\frac{\e}{3})^{-1}$, $\frac{2}{p}=\theta(1+\frac{\e}{3})$
and $\frac{1}{q}=\frac{1-\theta}{2}$. It was obtained by Linares and Saut, see \cite[Proposition 3.1]{LS}.
The $L^4$-estimate corresponding to $\theta= \frac12$ and $\e=0$ is excluded in \eqref{ZK.3} but
nonetheless true. In fact, modifying the proof of Theorem 2 in \cite{GPS} appropriately we obtain
the bilinear estimate

\begin{equation}\label{ZK.4}
 \|U_{\phi}u_0U_{\phi}v_0\|_{L^2_{txy}} \ls \|D_x^{-\frac{1}{2}}u_0\|_{L^2_{xy}}\|\lb D_x\rb^sv_0\|_{L^2_{xy}},
\end{equation}

provided $s>\frac12$. Especially for $u_0=v_0$ we get with $P_{\Delta_k}=\F_x^{-1}\chi_{\{\xi \sim 2^k\}}\F_x$ that

$$\|P_{\Delta_1}U_{\phi}u_0\|_{L^4_{txy}} \ls \|P_{\Delta_1}u_0\|_{L^2_{xy}}.$$

Rescaling we see that also

$$\|P_{\Delta_k}U_{\phi}u_0\|_{L^4_{txy}} \ls \|P_{\Delta_k}u_0\|_{L^2_{xy}}$$

and hence by the Littlewood Paley Theorem

\begin{equation}\label{ZK.5}
 \|U_{\phi}u_0\|_{L^4_{txy}} \ls \|u_0\|_{L^2_{xy}}.
\end{equation}

Apart from the Strichartz type estimates and their bilinear refinement we can rely on a local smoothing
effect of Kato type in order to deal with the derivative in the nonlinearity. As was shown by Ribaud and
Vento, in the case of the linear ZK equation \eqref{ZKlin} it reads

\begin{equation}\label{ZK.6}
 \|\nabla_{xy}U_{\phi}u_0\|_{L_x^{\infty}L^2_{yt}} \ls \|u_0\|_{L^2_{xy}},
\end{equation}

see Proposition 3.1 of \cite{RV11}. To complement the use of the local smoothing effect we combine a Sobolev
embedding in the time variable with the $L^4$-Strichartz estimate in order to obtain the following maximal function
inequality. The argument given below was taken from \cite[Proof of Theorem 2.4]{KPV91}.

\begin{eqnarray}\label{ZK.7}
 \|U_{\phi}u_0\|_{L^4_{xy}L^{\infty}_t} & \ls & \|\lb D_t\rb ^{\frac{1}{4}+}U_{\phi}u_0\|_{L^4_{xyt}} \nonumber \\
                                 & = & \|\lb\pd_x\Delta\rb^{\frac{1}{4}+}U_{\phi}u_0\|_{L^4_{xyt}} \ls \|u_0\|_{{H}^s}, \qquad s>\frac{3}{4}.
\end{eqnarray}

\section{Proof of the local result for modified ZK}

Now let $X_{s,b}$ denote the Bourgain space associated with the phase function
$\phi(\xi,\eta)=\xi(\xi^2+|\eta|^2)$, more precisely let

$$X_{s,b} = \{u \in \Sc '(\R^4): \|u\|_{X_{s,b}}<\infty\},$$

where, with $(\xi,\eta,\tau)\in \R \times \R^2 \times \R$,

$$\|u\|_{X_{s,b}}= \|\lb(\xi,\eta)\rb^s\lb\tau - \phi(\xi,\eta)\widehat{u}\rb^b\|_{L^2_{\xi \eta \tau}}.$$

Then by the transfer principle the estimates for free solutions discussed in Section 2 imply corresponding estimates in
$X_{s,b}$ - norms for $b > \frac12$. For example we have 

\begin{equation}\label{ZK.10}
 \|u\|_{L^4_{txy}} \ls \|u\|_{X_{0,b}},
\end{equation}

\begin{equation}\label{ZK.11}
 \|\nabla_{xy}u\|_{L_x^{\infty}L^2_{yt}} \ls \|u\|_{X_{0,b}},
\end{equation}

and, for $s > \frac34 $,

\begin{equation}\label{ZK.12}
 \|u\|_{L^4_{xy}L^{\infty}_t}  \ls \|u\|_{X_{s,b}}.
\end{equation}

The bilinear estimate \eqref{ZK.4} is converted into

\begin{equation}\label{ZK.13}
 \|uv\|_{L^2_{xyt}}  \ls \|D_x^{-\frac12}u\|_{X_{0,b}}\|\lb D_x \rb^s v\|_{X_{0,b}},
\end{equation}

where $s> \frac12$ and again $b> \frac12$ are assumed. In the sequel we proceed similar as in
\cite[Proof of Theorem 2]{G09} and combine these estimates with duality and interpolation arguments
to obtain the following Proposition, which in turn implies Theorem \ref{LWPZK3D}.

\begin{prop}\label{mZKprop}
 For any $s > \frac12$ there exists a $b' > - \frac12$, such that for all $b > \frac12$ the estimate

$$\|\pd_x(uvw)\|_{X_{s,b'}}\ls \|u\|_{X_{s,b}}\|v\|_{X_{s,b}}\|w\|_{X_{s,b}}$$

holds true.
\end{prop}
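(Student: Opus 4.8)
The plan is to estimate the trilinear expression by duality, writing
$$
\|\pd_x(uvw)\|_{X_{s,b'}} = \sup_{\|g\|_{X_{-s,-b'}}\le 1} \left|\int \pd_x(uvw)\, \overline{g}\, dx\, dy\, dt\right|,
$$
and then distribute the $s$ derivatives and the single $x$-derivative from $\pd_x$ among the four factors $u,v,w,g$ using frequency localization. The governing heuristic is that on the support of the multiplier at least two of the four frequencies $(\xi,\eta)$ must be comparable to the largest, so the dangerous high-frequency region is controlled by pairing the two highest factors in an $L^2_{xyt}$ bilinear estimate while the remaining two factors go into $L^4_{txy}$ via \eqref{ZK.10}. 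The extra $x$-derivative is the crux: it should be absorbed by the $D_x^{-1/2}$ gain in the bilinear estimate \eqref{ZK.13} together with the Kato smoothing \eqref{ZK.11}, which supplies a full $\nabla_{xy}$ at the cost of an $L^\infty_x L^2_{yt}$ norm that pairs naturally against the maximal-function bound \eqref{ZK.12}.

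First I would reduce to a dyadic frequency-localized estimate by Littlewood-Paley decomposing each factor and arguing that the output frequency is bounded by the sum of the inputs, so one may assume two of the input $x$-frequencies are of comparable size $N$ and no smaller than the output $x$-frequency. Next I would split into cases according to which two of the four functions carry the top frequencies. In the main case the $\pd_x$ falling on the product can be estimated by $N$; I would route one high factor and $g$ through the bilinear estimate \eqref{ZK.13}, using its $D_x^{-1/2}$ to eat half of the derivative loss, and route the other high factor through the smoothing estimate \eqref{ZK.11} to absorb the remaining half-power of $\pd_x$, while the two low-frequency factors are placed in $L^4_{txy}$ by \eqref{ZK.10} and the maximal bound \eqref{ZK.12} respectively, summing the resulting dyadic pieces since $s>\tfrac12$ leaves a positive power of $N$ to spare. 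The role of $b'>-\tfrac12$ is to give a small negative power of the output modulation weight that can be traded, via the interpolation between \eqref{ZK.10} and the trivial $L^2$ bound, for the room needed to close the summation; concretely I would interpolate the $L^4$-Strichartz estimate with the identity to replace $b$ by a value slightly above $\tfrac12$ on each factor and simultaneously gain the admissible $b'$ on the output.

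The routine part is the bookkeeping of the dyadic sums and the verification that each individual product of four norms is controlled by $\prod \|\cdot\|_{X_{s,b}}$; this follows by distributing the weights $\lb(\xi,\eta)\rb^s$ and checking that the exponents balance, using $s>\tfrac12$ to create a summable geometric factor in the high-frequency parameter $N$. The main obstacle I anticipate is the precise allocation of the single $\pd_x$ derivative: because the smoothing estimate \eqref{ZK.11} delivers $\nabla_{xy}$ but the bilinear estimate \eqref{ZK.13} only gains $D_x^{-1/2}$, one must verify that in every frequency configuration the full derivative $N$ can be split as $N^{1/2}\cdot N^{1/2}$ with one half consumed by each mechanism, and in particular handle the degenerate subcase where the top $x$-frequency $\xi$ is small relative to $|\eta|$ (so that $\pd_x$ does not act like multiplication by the full spatial frequency). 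In that subcase the derivative is genuinely mild and can be dropped, but one must then recover summability from the $\eta$-regularity alone; tracking this carefully, together with ensuring the duality variable $g$ is always placed in a norm for which an estimate is available, is where the real work lies.
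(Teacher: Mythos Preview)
Your plan uses the same toolbox as the paper (bilinear refinement \eqref{ZK.13}, Kato smoothing \eqref{ZK.11}, $L^4$-Strichartz \eqref{ZK.10}, maximal function \eqref{ZK.12}), but the route you sketch via dyadic case analysis and duality has a genuine gap at the point that actually matters: obtaining $b' > -\tfrac12$ on the output. You describe this as ``a small negative power of the output modulation weight that can be traded'' for summability --- but the inequality goes the other way. The norm $X_{s,b'}$ with $b'>-\tfrac12$ is \emph{stronger} than $X_{s,-b}$ for $b>\tfrac12$; equivalently, the test function $g$ lives in $X_{-s,-b'}$ with $-b'<\tfrac12$, and none of the estimates \eqref{ZK.10}--\eqref{ZK.13} apply to such a $g$, since each requires $b>\tfrac12$. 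Your proposed fix (interpolating \eqref{ZK.10} with the identity) lowers the $b$-threshold on input factors but does nothing to strengthen the output index. There is also a counting slip in your main case: after pairing one high factor with $g$ via \eqref{ZK.13} and sending ``the other high factor'' through smoothing, only one factor remains, not ``two low-frequency factors''.

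The paper's argument avoids these issues by \emph{not} working directly at $b'>-\tfrac12$. Instead it proves two endpoint trilinear estimates and interpolates between them. First, using the dualized bilinear estimate \eqref{ZK.14} together with \eqref{ZK.13} and \eqref{ZK.10}, one gets
\[
\|\pd_x(uvw)\|_{X_{s,-b}} \ls \|u\|_{X_{s,b}}\|v\|_{X_{s,b}}\|w\|_{X_{s,b}}, \qquad s,b>\tfrac12,
\]
i.e.\ the estimate with $b'=-b<-\tfrac12$. Second, using smoothing \eqref{ZK.11} and the maximal function \eqref{ZK.12} one shows
\[
\|\pd_x(uvw)\|_{X_{\sigma,0}} \ls \|u\|_{X_{\sigma,b}}\|v\|_{X_{\sigma,b}}\|w\|_{X_{\sigma,b}}, \qquad \sigma>\tfrac54,
\]
i.e.\ $b'=0$ at the price of extra regularity. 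Multilinear interpolation between these two lines then yields the claim for any $s>\tfrac12$ with some $b'\in(-\tfrac12,0)$. This interpolation step is precisely the missing idea in your sketch; without it (or an equivalent device, such as a genuine modulation/resonance analysis producing a gain from $\langle\tau-\phi\rangle^{b'}$), the passage from $b'=-b$ to $b'>-\tfrac12$ cannot be closed by the case analysis you outline.
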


\begin{proof}
 Dualizing the bilinear estimate \eqref{ZK.13} we obtain for $s,b > \frac12$

\begin{equation}\label{ZK.14}
 \|D_x^{\frac12}(uv)\|_{X_{0,-b}} \ls \|u\|_{L^2_{xyt}}\|\lb D_x \rb^s v\|_{X_{0,b}}.
\end{equation}

Here and in \eqref{ZK.13} we clearly may replace the $\lb D_x \rb^s$ on the right by $\lb \nabla_{xy} \rb^s$.
Now pointwise estimates on Fourier side give

$$\|\pd_x(uvw)\|_{X_{s,-b}}\ls\|D_x^{\frac12}((D_x^{\frac12}\lb \nabla_{xy} \rb^su)vw)\|_{X_{0,-b}}+
\|D_x^{\frac12}((D_x^{\frac12}u)(\lb \nabla_{xy} \rb^sv)w)\|_{X_{0,-b}}+ \dots ,$$

where the dots indicate similar terms. For the first contribution we use first \eqref{ZK.14} and then
\eqref{ZK.13} to obtain the upper bound

$$\|(D_x^{\frac12}\lb \nabla_{xy} \rb^su)v\|_{L^2_{xyt}}\|\lb D_x \rb^s w\|_{X_{0,b}} \ls 
\|u\|_{X_{s,b}}\|v\|_{X_{s,b}}\|w\|_{X_{s,b}}.$$

For the second contribution we start again with \eqref{ZK.14} and continue with H\"older's inequality
and two applications of \eqref{ZK.10} to see that it is bounded by

$$\|(D_x^{\frac12}u)(\lb \nabla_{xy} \rb^sv)\|_{L^2_{xyt}}\|\lb D_x \rb^s w\|_{X_{0,b}} \ls
\|u\|_{X_{s,b}}\|v\|_{X_{s,b}}\|w\|_{X_{s,b}}.$$

Thus we have achieved

\begin{equation}\label{ZK.15}
 \|\pd_x(uvw)\|_{X_{s,-b}}\ls\|u\|_{X_{s,b}}\|v\|_{X_{s,b}}\|w\|_{X_{s,b}},
\end{equation}

where $s,b > \frac12$. It remains to replace the $-b < - \frac12$ on the left by a $b' > -\frac12$.
For that purpose we estimate for $\sigma > \frac54$

\begin{eqnarray*}
 \|\lb \nabla_{xy} \rb^{\sigma} \pd_x(uvw)\|_{L^2_{xyt}} & \ls &
\|(\lb \nabla_{xy} \rb^{\sigma} \pd_xu)vw\|_{L^2_{xyt}} + 
\| (\pd_x u)(\lb \nabla_{xy} \rb^{\sigma}v)w\|_{L^2_{xyt}} + \dots \\
& \ls & \|\lb \nabla_{xy} \rb^{\sigma} \pd_xu\|_{L^{\infty}_xL^2_{yt}}\|v\|_{L^4_xL^{\infty}_{yt}}\|w\|_{L^4_xL^{\infty}_{yt}} \\
& + & \|(\pd_x u)(\lb \nabla_{xy} \rb^{\sigma}v)\|_{L^{4}_xL^2_{yt}}\|w\|_{L^4_xL^{\infty}_{yt}} + \dots
\end{eqnarray*}

For the first contribution we get the upper bound $\|u\|_{X_{\sigma,b}}\|v\|_{X_{\sigma,b}}\|w\|_{X_{\sigma,b}}$
by Kato smoothing for the first and by the maximal function estimate for the second and third factor. The second
contribution is estimated by

$$\|\pd_xu\|_{L^{\infty}_xL^4_{yt}}\|\lb \nabla_{xy} \rb^{\sigma}v\|_{L^{4}_{xyt}}\|w\|_{L^4_xL^{\infty}_{yt}}
\ls \|u\|_{X_{\sigma,b}}\|v\|_{X_{\sigma,b}}\|w\|_{X_{\sigma,b}},$$

where we have used a Sobolev embedding in $x$ and the $L^4$-Strichartz estimate for the first, the same
Strichartz estimate for  the second and the maximal function estimate for the third factor. This shows that for
$b > \frac12$ and $\sigma > \frac54$

\begin{equation}\label{ZK.16}
 \|\lb \nabla_{xy} \rb^{\sigma} \pd_x(uvw)\|_{L^2_{xyt}}\ls 
\|u\|_{X_{\sigma,b}}\|v\|_{X_{\sigma,b}}\|w\|_{X_{\sigma,b}}.
\end{equation}

Finally interpolation among \eqref{ZK.15} and \eqref{ZK.16} gives the claimed estimate.

\end{proof}

\end{document}